\numberwithin{equation}{section}
\theoremstyle{plain}
\newtheorem{thm}{Theorem}[section]
\theoremstyle{plain}
 \newtheorem{propo}{Proposition}[section]
 \theoremstyle{definition}
   \newtheorem{defi}{Definition}[section]
   \theoremstyle{definition} 
 \newtheorem{rmk}{Remark}[section]
\begin{document}

\begin{frontmatter}
\title{Random Discrete Probability Measures Based on Negative Binomial Process}
\runtitle{Random Discrete Probability Measure Based on NBP}

\begin{aug}
\author{\fnms{Sadegh} \snm{Chegini}\thanksref{addr1}\ead[label=e1]{Sadegh.Chegini@uottawa.ca}}
\and
\author{\fnms{Mahmoud} \snm{Zarepour}\thanksref{addr1,t3}\ead[label=e2]{Mahmoud.Zarepour@uottawa.ca}}

\runauthor{S. Chegini and M. Zarepour}


\thankstext{addr1}{Department of Mathematics and Statistics,
University of Ottawa, 
            Ottawa,
            Canada 
 \printead*{e1} ~~
    \printead*{e2}
 }


\thankstext{t3}{Corresponding author}

\end{aug}

\begin{abstract}
An important functional of Poisson random measure is the negative binomial process (NBP). We use NBP to introduce a generalized Poisson-Kingman distribution and its corresponding random discrete probability measure. This random discrete probability measure provides a new set of priors with more flexibility in nonparametric Bayesian models. It is shown how this random discrete probability measure relates to the nonparametric Bayesian priors such as Dirichlet process, normalized positive $\alpha$-stable process, Poisson-Dirichlet process (PDP), and others. An extension of the DP with its almost sure approximation is presented.
 Using our representation for NBP, we derive a new series representation for the PDP. 
\end{abstract}

\begin{keyword}[class=MSC]
\kwd[Primary ]{60G57}
\kwd[; Secondary ]{60G55}
\end{keyword}

\begin{keyword}
\kwd{Dirichlet process}
\kwd{gamma process}
\kwd{Poisson-Dirichlet process}
\kwd{nonparametric Bayesian}
\kwd{negative binomial process}
\kwd{$\alpha$-stable process}
\kwd{generalized gamma process}
\end{keyword}

\end{frontmatter}

\section {Introduction}
\label{section1}

The Poisson random measure (or Poisson point process) relates to other important processes through its functionals. The setup for a general point process follows the exposition in \citet{Kellenberg1983} and \citet[Ch.3]{Resnick1987}.
Let $(\mathbb{E},\mathscr{E})$ be a locally compact space with a countable
basis with its associated Borel $\sigma$-algebra and also
let $(\mathbb{M},\mathscr{M})$ be the space of all point measures defined on $\mathbb{E}$
with its associated $\sigma$-algebra. 
A point process $\xi$ on $\mathbb{E}$ is a measurable map from the probability
space $(\Omega, \mathcal{F}, P) \to (\mathbb{M},\mathscr{M})$. It is well known that the probability law
of a process is uniquely determined by its Laplace functional. 
The Laplace functional of the point process $\xi$ is defined by
\begin{align*}
\Psi_{\xi}(f)=E(e^{-\xi(f)})=\int_{m\in \mathbb{M}}\mathrm{exp}\left\{-\int_{\mathbb{E}}f(x)m(dx) \right\} P(\xi \in \mathrm{d}m)
\end{align*}
where $f:\mathbb{E}\to [0,\infty)$ is measurable.

A point process $\xi$ on $(\mathbb{E}, \mathscr{E})$ is called a Poisson random measure with
mean measure  $\mu$, denoted by PRM$(\mu),$ if its random number of points in a set $A\in  \mathscr{E}$
has a Poisson distribution with parameter $\mu(A)$ and the numbers of points in disjoint
sets are independent random variables.
It can be shown that the Laplace functional of a PRM$(\mu)$ is given by
\begin{equation}
\Psi_{\xi}(f)=\mathrm{\exp}\left\{-\int_{\mathbb{E}}(1-e^{-f(x)})\mu(\mathrm{d}x)\right\}. \label{eq:2.0}
\end{equation}
\noindent
The following straightforward proposition derives a representation for the Poisson random measure with Lebesgue mean measure. 
There are many different ways to show this result. Since the recursive technique introduced in \citet{Banjevic2002} is helpful in other similar situations, we present it here.
\begin{propo}
Let $\xi\sim\mathrm{PRM}(\lambda)$ where $\lambda$ is the Lebesgue measure  on $[0,\infty)$. Then $\xi$ can be written as follows
\begin{equation}
\xi=\sum_{i=1}^{\infty}\delta_{\Gamma_{i}},\label{eq:2.1}
\end{equation}
where $$\Gamma_{i}=E_{1}+\cdots+E_{i},$$ and $(E_{i})_{i\geq1}$ is a sequence of independent and identically distributed (i.i.d.) random variables with an exponential distribution of mean 1. Throughout this paper, $\delta_X$ denotes the Dirac measure at $X$, i.e. $\delta_X(B)=1$ if $X\in B$ and $0$ otherwise.
\end{propo}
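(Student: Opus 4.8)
The plan is to pin down the law of $\xi$ through its Laplace functional — which, as recalled above, determines the law of a point process — and to show it coincides with \eqref{eq:2.0}. Set $\zeta := \sum_{i=1}^{\infty}\delta_{\Gamma_i}$. First I would check that $\zeta$ is almost surely a locally finite point measure, so that it really defines a random element of $(\mathbb{M},\mathscr{M})$: since the $E_i$ are i.i.d.\ with mean $1$, the strong law of large numbers gives $\Gamma_i/i\to 1$ a.s., hence $\Gamma_i\to\infty$ and every bounded interval contains only finitely many of the $\Gamma_i$.

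The core is the recursive argument of \citet{Banjevic2002}. Fix $f$; it suffices to treat continuous $f$ with compact support, since such functions already determine the law. Write $\Psi(f)=E\bigl[\exp\{-\sum_{i\ge 1}f(\Gamma_i)\}\bigr]$ and condition on $E_1=\Gamma_1$. Conditionally on $E_1=t$, the sequence $(\Gamma_{i+1}-t)_{i\ge 1}=(E_2+\cdots+E_{i+1})_{i\ge 1}$ is, by the i.i.d.\ assumption, an independent copy of $(\Gamma_i)_{i\ge 1}$; writing $f_t(x)=f(t+x)$ this gives the functional equation
\[
\Psi(f)=\int_0^{\infty}e^{-t}\,e^{-f(t)}\,\Psi(f_t)\,dt .
\]
Applying the same identity with $f_s$ in place of $f$ and substituting $u=s+t$ yields $e^{-s}\Psi(f_s)=\int_s^{\infty}e^{-u}e^{-f(u)}\Psi(f_u)\,du$. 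Setting $H(s)=\Psi(f_s)$, the right-hand side is continuously differentiable in $s$ (by dominated convergence $H$ is continuous, and then the integral is $C^1$), and differentiating gives the linear ODE $H'(s)=(1-e^{-f(s)})H(s)$.

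It remains to integrate this equation. Because $f$ has compact support, $f_s\equiv 0$ for all large $s$, so $H(s)=\Psi(0)=1$ for such $s$; integrating the ODE backwards from there, and using $\int_0^{\infty}(1-e^{-f(u)})\,du<\infty$, gives $\Psi(f)=H(0)=\exp\{-\int_0^{\infty}(1-e^{-f(x)})\,dx\}$, which is exactly \eqref{eq:2.0}. Hence $\zeta\stackrel{d}{=}\xi$, proving \eqref{eq:2.1}. The step I expect to need the most care is making the conditioning rigorous — justifying that, given $\Gamma_1$, the tail of the renewal sequence is an independent shifted copy of the whole sequence — together with checking the mild regularity (continuity of $H$, differentiation under the integral sign) needed to pass to the ODE; once these are in place the remaining computations are routine. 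As an alternative to the ODE one can simply verify directly that the right-hand side of \eqref{eq:2.0} solves the functional equation for $\Psi$ and then invoke uniqueness, but the ODE route makes that uniqueness transparent.
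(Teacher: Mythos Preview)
Your proposal is correct and follows essentially the same recursive approach as the paper: your $H(s)=\Psi(f_s)$ is exactly the paper's $\Psi_{\xi_s}(f)$ (since $\xi_s(f)=\zeta(f_s)$), and the integral equation, the resulting ODE, and its solution all coincide. You are in fact more careful than the paper in supplying the boundary condition $H(s)\to 1$ and in flagging the regularity needed for differentiation under the integral; the paper simply writes down the solution of the ODE without making these points explicit.
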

\begin{proof}
For any $t\geq 0$, define 
$$\xi_t=\sum_{i=1}^{\infty}\delta_{\Gamma_{i}+t}$$
such that $\xi_0=\xi$. Now, for any nonnegative function $f$,
\begin{align*}
\Psi_{\xi_t}(f)&=E(e^{-\xi_t(f)})=E(e^{-\sum_{i=1}^{\infty}f(\Gamma_{i}+t)})\\
&=E(E(e^{-\sum_{i=1}^{\infty}f(\Gamma_{i}+t)}|\Gamma_{1}=s))\\
&=\int_{0}^{\infty}e^{-f(s+t)}E(e^{-\sum_{i=1}^{\infty}f(\Gamma_{i}+s+t)})e^{-s}\mathrm{d}s\\
&=\int_{0}^{\infty}e^{-f(s+t)}\Psi_{\xi_{s+t}}(f)e^{-s}\mathrm{d}s.
\end{align*}
Using the change of variable $s+t = v$ and multiplying both sides by $e^{-t}$, we get
\begin{align*}
e^{-t}\Psi_{\xi_t}(f)=\int_{t}^{\infty}e^{-f(v)}\Psi_{\xi_v}(f)e^{-v}\mathrm{d}v.
\end{align*}
Differentiating both sides with respect to $t$, we get
\begin{align*}
-e^{-t}\Psi_{\xi_t}(f)+e^{-t}\frac{\partial\Psi_{\xi_t}(f)}{\partial t}&=-e^{-f(t)}\Psi_{\xi_t}(f)e^{-t}\\
\frac{\partial\Psi_{\xi_t}(f)}{\partial t}\frac{1}{\Psi_{\xi_t}(f)}&=1-e^{-f(t)}\\
\Psi_{\xi_t}(f)&=\mathrm{exp}\left(-\int_{t}^{\infty}(1-e^{-f(s)})\mathrm{d}s\right).
\end{align*}
Now, take $t = 0$ to get
\begin{align*}
\Psi_{\xi_0}(f)&=\mathrm{exp}\left(-\int_{0}^{\infty}(1-e^{-f(s)})\mathrm{d}s\right)
\end{align*}
which equals \eqref{eq:2.0} with $\mu(\mathrm{d}s)=\lambda (\mathrm{d}s)=\mathrm{d}s$. 
\end{proof}

\bigskip
\noindent
Applying Proposition 2.1 and 2.2 of \citet{Resnick1986} on $\mathrm{PRM}(\lambda)$ defined in \eqref{eq:2.1}, we can derive useful PRMs which in turn lead to other processes with applications in nonparametric Bayesian inference. 
First, take $T(x)=L^{-1}(x)$ where $L:(0,\infty)\to (0,\infty)$ is a decreasing bijection such that
$\sum_{i=1}^{\infty} L^{-1}(\Gamma_i)<\infty$, and 
$$ L^{-1}(y)=\inf \{x>0:L(x)\geq y\}. $$
Also, let $(\zeta_{i})_{i\geq 1}$ be a sequence of i.i.d. random elements in a Polish space $\mathbb{E}$ with a probability measure $H$ independent from $(\Gamma_{i})_{i\geq 1}$. Then we simply find that
\begin{align}
\sum_{i=1}^{\infty}\delta_{ L^{-1}(\Gamma_{i})}&\sim \mathrm{PRM}(L), \label{eq:2aaa}\\
\sum_{i=1}^{\infty}\delta_{(\zeta_{i}, L^{-1}(\Gamma_{i}))}&\sim \mathrm{PRM}(H\times L). \label{eq:2aaab}
\end{align}

\noindent
For example, $\sum_{i=1}^{\infty}\delta_{\Gamma_{i}^{-1/\alpha}}$
follows a PRM$(L)$ with 
\begin{equation}
L(x)=x^{-\alpha}=\int_{x}^{\infty}\alpha u^{-\alpha-1}\mathrm{d}u,~x>0,~ \alpha\in (0,1). \label{eq:2aacc}
\end{equation}
Throughout this paper, we use the function $L$  also as a measure with notation $L(\mathrm{d}x)=\mathrm{d} L(x)$. In fact, \eqref{eq:2aacc} denotes the L\'evy measure of the $\alpha$-stable random variable
$S_{\alpha}=\sum_{i=1}^{\infty}\Gamma_{i}^{-1/\alpha}.$
Notice that since $\Gamma_{i}/i\stackrel{a.s.}\longrightarrow 1$, $S_{\alpha}$ converges 
for $\alpha\in (0,1).$

\noindent
As another example, for $\theta>0$ and $x>0$, take 
\begin{equation}
L(x)=\theta\int_{x}^{\infty}u^{-1}e^{-u}\mathrm{d}u. \label{eq:2aab}
\end{equation}
Then a functional of  the random measure \eqref{eq:2aaab} given by
$Q=\sum_{i=1}^{\infty}L^{-1}(\Gamma_{i})\delta_{\zeta_{i}},$
is a gamma process denoted by GaP$(\theta, H)$. This means that for disjoint sets 
$A_{1}, \ldots, A_{k}$, the random variables
$\{Q(A_{i})\}_{1\leq i\leq k}$ are independent and $Q(A_{i})$ has a gamma distribution with shape parameter $\theta H(A_{i})$ and scale parameter of $1$.
Independence follows since $Q$ is a pure jump L\'evy process. See \citet{IshwaranZarepour2002} for more details.
This finite random measure is self-normalized as follows
\begin{equation}
P_{\theta,H}(\cdot)=\sum_{i=1}^{\infty}\frac{L^{-1}(\Gamma_{i})}{\sum_{i=1}^{\infty}
L^{-1}(\Gamma_{i})}\delta_{\zeta_{i}}(\cdot)  \label{eq:3.1}
\end{equation}
by \citet{Ferguson1973} to define the Dirichlet process DP$(\theta,H)$ to use it as a prior on the space of all probability measures on $\mathbb{E}$. The Dirichlet process is known as the cornerstone of the nonparametric Bayesian analysis. There has been an extensive effort to provide some generalizations and alternatives for this process; see for example, \citet{Pitman1997}, and \citet{Lijoi2005}. Also, see \citet{IshwaranZarepour2002} and \citet{Zarepour2012} for some alternative representations and approximations of this process.

Another important distribution which is resulted from a PRM is the Poisson-Kingman distribution. Consult \citet{Kingman1975} and \citet{Pitman2003} for properties and applications of this distribution. The vector of the normalized points of the PRM defined in equation \eqref{eq:2aaa}, which we call them Poisson-Dirichlet weights, will follow a Poisson-Kingman distribution denoted by PK$(L)$, i.e.
\begin{equation}
\left (\frac{L^{-1}(\Gamma_{1})}{\sum_{i=1}^{\infty}L^{-1}(\Gamma_{i})}, \frac{L^{-1}(\Gamma_{2})}{\sum_{i=1}^{\infty}L^{-1}(\Gamma_i)}, \ldots  \right ) \sim \mathrm{PK}(L) \label{eq:222bbb}
\end{equation}
defines a random discrete distribution on the infinite
dimensional simplex $\nabla_{\infty}:=\{(x_1,x_2,\ldots): x_i\geq 0,  i=1, 2, \ldots, \sum_{i=1}^{\infty}x_i=1\}$. As a particular case, if in \eqref{eq:222bbb} we take $L$ as the gamma L\'evy measure given in \eqref{eq:2aab}, then the Poisson-Dirichlet weights \eqref{eq:222bbb} are also said to have Poisson-Dirichlet distribution with parameter $\theta$, which we denote by $\mathrm{PD}(0,\theta)$. Also, as another special case, if in \eqref{eq:222bbb} one takes $L$ as the $\alpha$-stable L\'evy measure given in \eqref{eq:2aacc}, then the corresponding Poisson-Dirichlet weights \eqref{eq:222bbb} are also said to have Poisson-Dirichlet distribution with parameter $\alpha$, which we denote by $\mathrm{PD}(\alpha,0)$. Additionally, the random probability measure  \eqref{eq:3.1} is called a normalized $\alpha$-stable process if we employ the later Poisson-Dirichlet weights. See \citet{Ishwaran2001}.

Equivalently, a Poisson-Kingman distribution can be constructed using subordinators. Let $(X_t)_{t\geq 0}$ be a subordinator with L\'evy measure $L$ and write $(\Delta X_t:=X_t-X_{t-})_{t>0}$ for the jump process of $X_t$, and $\Delta X_t^{(1)}\geq\Delta X_t^{(2)}\geq \cdots$ for the ordered jumps up till time $t>0$. Then 
\begin{equation}
\left (\frac{\Delta X_t^{(1)}}{X_t}, \frac{\Delta X_t^{(2)}}{X_t}, \ldots  \right ) \sim \mathrm{PK}(tL). \nonumber
\end{equation}

We saw how using a PRM, the Poisson-Kingman distribution \eqref{eq:222bbb} is obtained and consequently how the random discrete probability measure \eqref{eq:3.1} can be constructed simply by using the Poisson-Dirichlet weights of the Poisson-Kingman distribution. In the rest of the paper, we will generalize the Poisson-Kingman distribution and the resulting random discrete probability measure by utilizing the negative binomial process instead of PRM. The negative binomial process representation which we use here is itself constructed directly from a PRM, unlike the representation in \citet{ross2018} where the negative binomial process is constructed from a trimmed subordinator.

The paper is organized as follows. In section \ref{section2}, we derive the negative binomial process as a functional of a PRM and then using this process, we generalize the random discrete probability measure \eqref{eq:3.1} and equivalently, the Poisson-Kingman distribution by adding a new parameter. As a special member of the family of the new defined random discrete probability measure, an extension of the Dirichlet process with its almost sure approximation is presented in section \ref{section4}.  Then, we present the general structures of the posterior and predictive processes in section \ref{section44}. A justification of the role of the parameter $r$ in clustering problem is given in section \ref{section45}.  In section \ref{section3}, we derive a new series representation for the Poisson-Dirichlet process \citep{Carlton1999}, which is based on our new representation of the negative binomial process.  In section \ref{section5}, we provide a simulation study to compare the efficiency of our suggested approximation of the new series representation of the Poisson-Dirichlet process with other representations of this process that exist in the literature. Finally, a summary of the conclusions is given in the last section.

\section {Negative Binomial Process}
\label{section2}


In this section, we will see how the negative binomial process (NBP) is derived directly as a functional of a PRM. Later, we use this process to define a more general form of the Poisson-Kingman distribution and its corresponding random discrete probability measure. First, we note that for any constant $c>0$, a simple use of Proposition 2.1 of \citet{Resnick1986} shows that the process
$\sum_{i=1}^{\infty}\delta_{\Gamma_{i}+c}$
is a PRM$(\lambda)$ on $\mathbb{E}=[c,\infty)$, and $\sum_{i=1}^{\infty}\delta_{\Gamma_{i}/c}$
follows PRM$(c\lambda)$ on $\mathbb{E}=[0,\infty)$. Now, for any nonnegative integer $r$ and setting $\Gamma_0=1,$ consider the random measure
\begin{equation}
\eta=\sum_{i=r+1}^{\infty}\delta_{\Gamma_i/\Gamma_r}. \nonumber
\end{equation}
First, note that conditional on $\{\Gamma_r=u\}$, the process $\eta$
follows PRM$(u\lambda)$ on $\mathbb{E}=(1,\infty)$. So, the Laplace functional of $\eta$ is

$$E(e^{-\eta(f)})=E\left [E(e^{-\eta(f)}|\Gamma_r=u)\right ]$$
$$=\int_{0}^{\infty}E(e^{-\eta(f)}|\Gamma_r=u) P(\Gamma_r\in \mathrm{d}u)$$
$$=\int_{0}^{\infty}\exp\left\{-\int_{1}^{\infty}(1-e^{-f(x)})u\lambda(\mathrm{d}x)\right\} P(\Gamma_r\in \mathrm{d}u)$$
$$=\int_{0}^{\infty}\exp\left\{-u\int_{1}^{\infty}(1-e^{-f(x)})\lambda(\mathrm{d}x)\right\} \frac{u^{r-1}e^{-u}}{\Gamma(r)} \mathrm{d}u$$
$$=\left(1+\int_{1}^{\infty}(1-e^{-f(x)})\lambda(\mathrm{d}x)\right)^{-r}.$$
This is in fact the Laplace functional of the negative binomial process defined in  
\citet{Gregoire1984}. We denote this process by NBP$(r, \lambda)$ and write $\eta\sim\mathrm{NBP}(r, \lambda)$ on $\mathbb{E}=(1,\infty)$. Following up on this example, we state the following theorem.

\bigskip
\noindent
\begin{thm}
With a decreasing bijection $L:(0,\infty)\rightarrow (0,\infty)$ such that
$\sum_{i=1}^{\infty} L^{-1}(\Gamma_i)<\infty$, the following point process
\begin{equation}
\kappa=\sum_{i=r+1}^{\infty}\delta_{L^{-1}(\Gamma_i/\Gamma_r)} \label{eq:3aa}
\end{equation}
follows an $\mathrm{NBP}(r, L)$ on $\mathbb{E}=(0,L^{-1}(1))$. 
\end{thm}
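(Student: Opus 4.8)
The plan is to reduce the statement to the negative binomial example established immediately above --- that $\eta=\sum_{i=r+1}^{\infty}\delta_{\Gamma_i/\Gamma_r}$ is an $\mathrm{NBP}(r,\lambda)$ on $(1,\infty)$ --- and then to transport that conclusion along the map $x\mapsto L^{-1}(x)$. First I would record the elementary geometric facts: since $L:(0,\infty)\to(0,\infty)$ is a decreasing bijection, so is $L^{-1}$, with $L^{-1}(x)\downarrow 0$ as $x\to\infty$; hence $L^{-1}$ is a measurable bijection of $(1,\infty)$ onto $\mathbb{E}=(0,L^{-1}(1))$ which pushes the restriction of Lebesgue measure to $(1,\infty)$ forward to the restriction of $L$ to $\mathbb{E}$ (this is the computation underlying \eqref{eq:2aaa}). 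Since $\Gamma_i/\Gamma_r>1$ for $i>r$, every atom of $\kappa$ lies in $\mathbb{E}$; and since $\Gamma_i\to\infty$ a.s., only finitely many of these atoms exceed any level $a\in(0,L^{-1}(1))$, so $\kappa$ is a.s.\ a locally finite point measure on $\mathbb{E}$.

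Next I would repeat the conditioning argument from the display preceding the theorem. Conditional on $\{\Gamma_r=u\}$, $\eta\sim\mathrm{PRM}(u\lambda)$ on $(1,\infty)$, so by the mapping theorem for Poisson random measures (Propositions 2.1 and 2.2 of \citet{Resnick1986}, as used to obtain \eqref{eq:2aaa}) the image process $\kappa$, conditional on $\{\Gamma_r=u\}$, is a $\mathrm{PRM}(uL)$ on $\mathbb{E}$ --- here $uL$ is Radon since $L([a,b])=L(a)-L(b)<\infty$ for every $[a,b]\subset\mathbb{E}$. Integrating against the $\mathrm{Gamma}(r,1)$ density of $\Gamma_r$, for any measurable $f:\mathbb{E}\to[0,\infty)$ I obtain
\begin{align*}
E\bigl(e^{-\kappa(f)}\bigr)
&=\int_{0}^{\infty}\exp\Bigl\{-u\int_{\mathbb{E}}(1-e^{-f(x)})\,L(\mathrm{d}x)\Bigr\}\,\frac{u^{r-1}e^{-u}}{\Gamma(r)}\,\mathrm{d}u\\
&=\Bigl(1+\int_{\mathbb{E}}(1-e^{-f(x)})\,L(\mathrm{d}x)\Bigr)^{-r},
\end{align*}
which is exactly the Laplace functional of $\mathrm{NBP}(r,L)$ on $\mathbb{E}$ in the sense of \citet{Gregoire1984}. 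Since the law of a point process is determined by its Laplace functional, it follows that $\kappa\sim\mathrm{NBP}(r,L)$.

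There is no genuinely hard step here; the argument is bookkeeping, and the points that need care all sit in the first paragraph: confirming that $L^{-1}$ is a bona fide measurable bijection of $(1,\infty)$ onto $\mathbb{E}$ carrying $\lambda|_{(1,\infty)}$ to $L|_{\mathbb{E}}$ (which is precisely what lets the conditional mapping theorem be applied verbatim), and confirming that $\kappa$ really is a point measure on the \emph{open} interval $\mathbb{E}$ --- automatic from $\Gamma_i\to\infty$, and in any case consistent with the standing hypothesis $\sum_{i}L^{-1}(\Gamma_i)<\infty$ needed later for normalization. Once the conditional $\mathrm{PRM}(uL)$ identity is in place, the integration over $\Gamma_r$ is word-for-word the gamma computation already carried out for $\eta$, so no new estimates are required.
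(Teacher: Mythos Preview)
Your proposal is correct and matches the paper's own approach: the paper simply states that the proof is ``similar to what was presented above for the process $\eta$,'' i.e., condition on $\{\Gamma_r=u\}$ to obtain a PRM with intensity $uL$ and then integrate against the Gamma$(r,1)$ law of $\Gamma_r$ to recover the NBP Laplace functional. You have spelled out exactly that argument, with the additional (and appropriate) care of verifying that $L^{-1}$ carries $\lambda|_{(1,\infty)}$ to $L|_{\mathbb{E}}$ and that $\kappa$ is a bona fide locally finite point measure on $\mathbb{E}$.
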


\noindent
The proof of the theorem is similar to what was presented above for the process $\eta$.
Three important examples of $L$ are positive $\alpha$-stable, gamma, and inverse-Gaussian L\'evy measures \citep{Lijoi2005,Labadi2013,Labadi2014}.

\noindent
The NBP was defined in \citet{Gregoire1984} only through its Laplace functional and no point process or subordinator representation was provided.  
As it is seen, the point process representation of NBP$(r, L)$ in \eqref{eq:3aa} was derived directly as a functional of a PRM. In \citet{ross2018}, a point process representation of the NBP, which equals to \eqref{eq:3aa} in distribution, was derived using ordered jumps of a trimmed subordinator. If $(X_t)_{t\geq 0}$ is a subordinator with L\'evy measure $L$, by writing $(\Delta X_t:=X_t-X_{t-})_{t> 0}$ for the jump process of $X_t$, and $\Delta X_t^{(1)}\geq\Delta X_t^{(2)}\geq \cdots$ for the ordered jumps at $t>0$, then the point process $\mathbb{B}^{(r)}=\sum_{i=1}^{\infty}\delta_{J_r(i)}$ follows NBP$(r,L)$ where $J_r(i)=\frac{\Delta X_1^{(r+i)}}{\Delta X_1^{(r)}},i=1,2,\ldots$.

\begin{rmk}
In the literature, the terminology “negative binomial process” is used for mathematically distinct concepts. Therefore, it seems necessary to clarify these concepts to avoid confusion. As stated before, in this paper, \citet{ross2018}, and \citet{ross2020,ross2021,Ipsen2018}, the negative binomial process is the one defined in \citet{Gregoire1984}.
However, in engineering and computer science literature, some other definitions of the negative binomial process can be found. For example, the  negative binomial process defined in \citet{Zhou2015} is different from the one defined in  \citet{Zhou2012} and \citet{Broderick2015}, which all are different from Gregoire's definition that we are considering in this paper.
\end{rmk}

Following Definition 2.1 of \citet{ross2018}, by normalizing the points of the NBP defined in \eqref{eq:3aa}, the following sequence
\begin{equation}
\left (\frac{L^{-1}(\Gamma_{r+1}/\Gamma_r)}{\sum_{i=r+1}^{\infty}L^{-1}(\Gamma_{i}/\Gamma_r)}, \frac{L^{-1}(\Gamma_{r+2}/\Gamma_r)}{\sum_{i=r+1}^{\infty}L^{-1}(\Gamma_i/\Gamma_r)}, \ldots  \right )  \label{eq:4}
\end{equation}
defines a 2-parameter random discrete distribution on the infinite
dimensional simplex $\nabla_{\infty}$. In \citet{ross2018}, this distribution is called a Poisson-Kingman distribution generated by NBP$(r,L)$ and is denoted by $\mathrm{PK}^{(r)}(L)$. In particular case of $\alpha$-stable L\'evy measure, this process is denoted by $\mathrm{PD}_{\alpha}^{(r)}$. Clearly, it is seen that the random sequence \eqref{eq:4} equals 
\begin{equation}
\left (\frac{J_r(1)}{\sum_{i=1}^{\infty}J_r(i)}, \frac{J_r(2)}{\sum_{i=1}^{\infty}J_r(i)}, \ldots  \right )  \nonumber
\end{equation}
in distribution. Also, as pointed out in \citet{ross2018}, an NBP$(r,L)$ can be characterized as a PRM with randomized intensity measure $\Gamma_rL$ where $\Gamma_r$ is a Gamma$(r,1)$ random variable, i.e. $\mathrm{PRM}(\Gamma_rL)\stackrel{d}= \mathrm{NBP}(r,L)$. 
In other words, a gamma subordinated L\'evy process $X_{\sigma_r}$ will follow an NBP$(r,L)$ where $(\sigma_r)_{r>0}$ is an independent gamma subordinator having L\'evy measure \eqref{eq:2aab} with $\theta=1$.   
Then by the definition of the Poisson-Kingman distribution, the random sequence \eqref{eq:4} will also equal in distribution to  
\begin{equation}
\left (\frac{\Delta X_{\sigma_r}^{(1)}}{X_{\sigma_r}}, \frac{\Delta X_{\sigma_r}^{(2)}}{X_{\sigma_r}}, \ldots  \right ).  \label{eq:4444}
\end{equation}
 
\begin{defi}
Let $(\zeta_i)_{i\geq 1}$ be i.i.d. random variables with values in $\mathbb{E}$ and common distribution $H$, then we may introduce the following random discrete probability measure on $E$ as a functional of \eqref{eq:3aa} or using the sequence  \eqref{eq:4} as follows
\begin{equation}
P_{r,L,H}(\cdot)=\sum_{i=r+1}^{\infty}\frac{L^{-1}(\Gamma_{i}/\Gamma_{r})}{\sum_{i=r+1}^{\infty}
L^{-1}(\Gamma_{i}/\Gamma_{r})}\delta_{\zeta_{i}}(\cdot).  \label{eq:3.4}
\end{equation}
We employ the notation $ \mathrm{PKP}^{(r)}(H;L)$ for the distribution of the random discrete probability measure defined in  \eqref{eq:3.4} and we write $P_{r,L,H}\sim \mathrm{PKP}^{(r)}(H;L)$. 
\end{defi}

 \section {Extended Dirichlet Process and its Approximation}
\label{section4}

In the case $L$ is the $\alpha$-stable L\'evy measure given in \eqref{eq:2aacc}, $\mathrm{PK}^{(r)}(L)$  has been investigated thoroughly. For example, it is shown that how this distribution relates to other Poisson–Dirichlet models by letting $r \to \infty$ in \citet{ross2020}. Also, in \citet{Ipsen2018}, this distribution is fitted to gene and species sampling data, demonstrating the utility of allowing the extra parameter $r$ in data analysis. 

We  may now take the probability measure \eqref{eq:3.4} with $L$ as the gamma L\'evy measure defined in \eqref{eq:2aab} to develop a prior distribution on the space of all probability distributions. This prior would be a natural extension of the Dirichlet process (the Dirichlet process is recovered when $r=0$). In the following theorem, we provide an efficient approximation for our extended Dirichlet process.

\bigskip
\noindent
\begin{thm}
Let $W_n$ be a random variable with distribution Gamma$(\alpha/n, 1)$. Define

\[
G_n(x)=\mathrm{Pr}(W_n>x)=\int_x^{\infty}\frac{1}{\Gamma(\alpha/n)}e^{-t}t^{\alpha/n-1}\mathrm{d}t
\]
and
\[ G_n^{-1}(y)=\mathrm{inf}\{x: G_n(x)\geq y\},~~0<y<1. \]
Let $L$ be the gamma L\'evy measure \eqref{eq:2aab} and $(\zeta_i)_{i\geq1}$ be a sequence of i.i.d. random variables with values in $\mathbb{E}$ and common distribution $H$, independent of
$(\Gamma_i)_{i\geq1}$, then as $n\to \infty$
\[
P_{n,r,H}=\sum_{i=r+1}^{n}\frac{G_n^{-1}\left (\frac{\Gamma_{i}}{\Gamma_{r}\Gamma_{n+1}}\right )}
{\sum_{i=r+1}^{n}G_n^{-1}\left (\frac{\Gamma_{i}}{\Gamma_{r}\Gamma_{n+1}}\right )}\delta_{\zeta_{i}}     \stackrel{a.s.}\longrightarrow       P_{r,L,H}=\sum_{i=r+1}^{\infty}\frac{L^{-1}\left (\frac{ \Gamma_{i}}{\Gamma_{r}}\right )}{\sum_{i=r+1}^{\infty}
L^{-1}\left (\frac{\Gamma_{i}}{\Gamma_{r}}\right )}\delta_{\zeta_{i}} 
\]
on $\mathbb{E}$ with respect to the weak topology.
\end{thm}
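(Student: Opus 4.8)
The plan is to follow the almost-sure approximation scheme of \citet{IshwaranZarepour2002} (which handles the $r=0$, Dirichlet-process case): fix a realization of the sequences $(\Gamma_i)_{i\ge1}$ and $(\zeta_i)_{i\ge1}$ outside a single null set and establish weak convergence of the random probability measures deterministically, i.e.\ show that $\int g\,\mathrm{d}P_{n,r,H}\to\int g\,\mathrm{d}P_{r,L,H}$ for every bounded continuous $g:\mathbb{E}\to\mathbb{R}$. Writing $S_n=\sum_{i=r+1}^{n}G_n^{-1}(\Gamma_i/(\Gamma_r\Gamma_{n+1}))$ and $S=\sum_{i=r+1}^{\infty}L^{-1}(\Gamma_i/\Gamma_r)$, it suffices to prove the single unnormalized statement
\[
\sum_{i=r+1}^{n}G_n^{-1}\!\Big(\tfrac{\Gamma_i}{\Gamma_r\Gamma_{n+1}}\Big)g(\zeta_i)\;\longrightarrow\;\sum_{i=r+1}^{\infty}L^{-1}\!\Big(\tfrac{\Gamma_i}{\Gamma_r}\Big)g(\zeta_i)\qquad\text{a.s.,}
\]
since taking $g\equiv1$ gives $S_n\to S$, which is finite and positive a.s.\ (checked in Step~2), so the normalized ratios then converge as well. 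The two ingredients are a deterministic approximation lemma for $G_n^{-1}$ and a dominated-convergence argument over the index $i$, whose number of terms grows with $n$.

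\textbf{Step 1 (analytic lemma).} First I would show that $nG_n(x)\to L(x)$ pointwise on $(0,\infty)$; this follows from $n/\Gamma(\alpha/n)=\alpha/\Gamma(1+\alpha/n)\to\alpha$ together with $\int_x^\infty t^{\alpha/n-1}e^{-t}\,\mathrm{d}t\to\int_x^\infty t^{-1}e^{-t}\,\mathrm{d}t$ by dominated convergence, so that $nG_n(x)\to\alpha\int_x^\infty t^{-1}e^{-t}\,\mathrm{d}t=L(x)$. Since $nG_n$ and $L$ are continuous and strictly decreasing and the limit is continuous, the convergence is uniform on compact subsets of $(0,\infty)$ (a standard consequence of monotonicity and continuity of the limit), and hence the generalized inverses satisfy $G_n^{-1}(y_n/n)\to L^{-1}(y)$ whenever $y_n\to y\in(0,\infty)$. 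Combined with $\Gamma_{n+1}/n\to1$ a.s.\ (so that $n\Gamma_i/(\Gamma_r\Gamma_{n+1})\to\Gamma_i/\Gamma_r$), this gives the termwise convergence $G_n^{-1}(\Gamma_i/(\Gamma_r\Gamma_{n+1}))=G_n^{-1}\big(\tfrac{1}{n}\cdot\tfrac{n\Gamma_i}{\Gamma_r\Gamma_{n+1}}\big)\to L^{-1}(\Gamma_i/\Gamma_r)$ a.s.\ for each fixed $i$; the finitely many indices $i$ close to $n$ for which the argument exceeds $1$ are handled by the convention $G_n^{-1}(y)=0$ for $y\ge1$ and do not affect the limit.

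\textbf{Step 2 (uniform envelope).} The crux is to bound the summands uniformly in $n$ by a summable sequence. Using that $\Gamma(1+\alpha/n)$ stays bounded below by a positive constant for large $n$, together with the elementary estimates $\int_x^1 t^{\alpha/n-1}e^{-t}\,\mathrm{d}t\le\log(1/x)$ (from $1-e^{-y}\le y$) and $\int_{\max(x,1)}^\infty t^{\alpha/n-1}e^{-t}\,\mathrm{d}t\le e^{-\max(x,1)}$, one obtains a fixed continuous decreasing bijection $\psi:(0,\infty)\to(0,\infty)$ with logarithmic blow-up at $0$ and exponential decay at $\infty$ such that $nG_n(x)\le\psi(x)$ for all $x>0$ and all large $n$. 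Inverting gives $G_n^{-1}(y/n)\le\psi^{-1}(y)$, and since $\Gamma_{n+1}/n\le2$ for $n$ large (a.s.), $G_n^{-1}(\Gamma_i/(\Gamma_r\Gamma_{n+1}))\le\psi^{-1}(\Gamma_i/(2\Gamma_r))=:c_i$ for $r+1\le i\le n$ and $n$ large. Since $\psi^{-1}$ decays exponentially and $\Gamma_i/i\to1$ a.s., $\sum_{i>r}c_i<\infty$ a.s.; an analogous lower bound on $L$ near $0$ shows $L^{-1}$ likewise decays exponentially, so $S=\sum_{i>r}L^{-1}(\Gamma_i/\Gamma_r)<\infty$ a.s.

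\textbf{Step 3 (conclusion).} With the uniform bound $|G_n^{-1}(\Gamma_i/(\Gamma_r\Gamma_{n+1}))\,g(\zeta_i)|\le\|g\|_\infty c_i$ (summable and independent of $n$) and the termwise convergence of Step~1, splitting the sum into a head $i\le K$ (finitely many terms, each converging as $n\to\infty$) and a tail $i>K$ (bounded by $\|g\|_\infty\sum_{i>K}(c_i+L^{-1}(\Gamma_i/\Gamma_r))$, which is small for $K$ large, uniformly in $n$) gives the displayed unnormalized convergence. Taking $g\equiv1$ yields $S_n\to S\in(0,\infty)$, so dividing proves $\int g\,\mathrm{d}P_{n,r,H}\to\int g\,\mathrm{d}P_{r,L,H}$; since the only almost-sure facts used ($\Gamma_n/n\to1$ and $S<\infty$) do not depend on $g$, this holds simultaneously for all bounded continuous $g$ outside one null set, i.e.\ $P_{n,r,H}\to P_{r,L,H}$ a.s.\ in the weak topology. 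The main obstacle is Step~2: producing an $n$-uniform summable envelope for the normalized small-shape gamma quantiles $G_n^{-1}$; once that and the approximation $nG_n\to L$ are available, the remainder is routine dominated convergence.
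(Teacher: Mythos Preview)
Your proposal is correct and follows essentially the same route as the paper. The paper's own proof is a two-line deferral to Theorem~1 of Zarepour and Al-Labadi (2012) together with the termwise convergence $G_n^{-1}(x/(cn))\to L^{-1}(x/c)$; your Steps~1--3 are precisely an unpacking of that reference, carrying the extra constant $c=\Gamma_r$ through the argument. Two cosmetic remarks: the relevant reference is \citet{Zarepour2012} rather than \citet{IshwaranZarepour2002}, and the bound $\int_x^1 t^{\alpha/n-1}e^{-t}\,\mathrm{d}t\le\log(1/x)$ in Step~2 follows simply from $t^{\alpha/n-1}\le t^{-1}$ and $e^{-t}\le1$ on $(0,1)$, not from the inequality $1-e^{-y}\le y$ you mention.
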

\noindent
\begin{proof}
The proof is similar to that of Theorem 1 in \citet{Zarepour2012} and the fact that $G_n^{-1}\left (\frac{x}{cn}\right ) \stackrel{a.s.}\longrightarrow L^{-1}\left (\frac{x}{c}\right )$. Consequently $G_n^{-1}\left (\frac{\Gamma_{i}}{\Gamma_{r}\Gamma_{n+1}}\right ) \stackrel{a.s.}\longrightarrow L^{-1}\left (\frac{ \Gamma_{i}}{\Gamma_{r}}\right )$ by taking the constant $c=\Gamma_{r}$, $x=\Gamma_{i}$, and $n=\Gamma_{n+1}$ as $\Gamma_{n+1}/n \stackrel{a.s.}\longrightarrow 1$ when $n\to \infty$. 
\end{proof}
\noindent
Our proposed approximation has several advantages. For example, our representation avoids the use of an infinite sum and instead our finite many weights are simply the quantile functions of the Gamma$(\alpha/n, 1)$ distribution evaluated at $1-\Gamma_{i}/\Gamma_{r}\Gamma_{n+1}$. In previous representation, it is necessary to calculate  $L^{-1}$ which can not be written in a closed form. In addition, our introduced weights are stochastically decreasing contrary to stick-breaking weights in  \citet{ross2018}. A similar proposal for the Dirichlet process can be found in \citet{Zarepour2012}.

\section {Posterior and Predictive Distribution}
\label{section44}
To develop a full Bayesian analysis, we can generalize $P_{r,L,H}$ defined in \eqref{eq:3.4} by assuming that $r$ is a realization of a random variable $R$, on set of non-negative integers with an arbitrary probability mass function $\pi(r).$
Also for simplicity in notations, denote the weights of $P_{R,L,H}$ by $p_i$. Therefore, we can write
\[ P_{R,L,H}=\sum_{i=R+1}^{\infty}p_i\delta_{\zeta_i}. \]
For given observations from $P_{R,L,H}$, the posterior and predictive distribution of the prior $P_{R,L,H}$ can be obtained from \citet{Ongaro2004} using a recursive method. This method obtains the posterior distribution for a general random discrete probability measure of the form 
\[ P=\sum_{i=1}^{M}p_i\delta_{\zeta_i}, \]
where $M$ is an extended integer valued random variable with an arbitrary probability distribution $p(m).$ Moreover, conditionally on $M,~(p_1,\ldots,p_M)$ has an arbitrary distribution $Q_{M}$ on simplex $\nabla_{M}:=\{(x_1,\ldots,x_M): x_i\geq 0,  i=1, 2, \ldots,M, \sum_{i=1}^{M}x_i=1\}$. The random positions $\zeta_i$'s are i.i.d. from a diffuse probability measure $H$ and are independent of all other random elements. 

In our case, for $P_{R,L,H}$, we only need to change the role of $M$ with $R$, where similar results of  \citet{Ongaro2004} follow easily.
Following their procedure, we need to find the posterior distribution of random elements of $P_{R,L,H}$, i.e. $(R,\boldsymbol{p},\boldsymbol{\zeta})$ where $\boldsymbol{p}=(p_{R+1},p_{R+2},\ldots)$ and $\boldsymbol{\zeta}=(\zeta_{R+1},\zeta_{R+2},\ldots)$.
The calculations for finding the posterior of $(R,\boldsymbol{p},\boldsymbol{\zeta})$ remain similar to the ones in Propositions 4 and 5 in \citet{Ongaro2004}. The final summary is provided in the following theorem.

\noindent
\begin{thm}
Let $\boldsymbol{X} = (X_1,\ldots,X_n)$ be a random sample of $n$ observations from $P_{R,L,H}$. The posterior process $P_{R,L,H}|\boldsymbol{X} $ can be represented as
\begin{equation}
\left (P_{R,L,H}|\boldsymbol{X} \right )=\sum_{i=1}^{k}\gamma_i^{\boldsymbol{X}}\delta_{X_i^{\ast}}+\sum_{i=R^{\boldsymbol{X}}+1}^{\infty}p_i^{\boldsymbol{X}}\delta_{\zeta_i}, \label{eq:5.1}
\end{equation}
where $X_i^{\ast}$s are the distinct values among the observations $\boldsymbol{X}$ and $R^{\boldsymbol{X}}$ and \\
$\boldsymbol{p}^{\boldsymbol{X}}=(\gamma_1^{\boldsymbol{X}},\ldots,\gamma_k^{\boldsymbol{X}}, p_{R^{\boldsymbol{X}}+1}^{\boldsymbol{X}},p_{R^{\boldsymbol{X}}+2}^{\boldsymbol{X}},\ldots)$ denote the posteriors of $R$ and $\boldsymbol{p}$, respectively. The distribution of $R^{\boldsymbol{X}}$ and $\boldsymbol{p}^{\boldsymbol{X}}$ are obtained using a recursive method similar to Corollaries 3 and 4 in \citet{Ongaro2004}. 
\end{thm}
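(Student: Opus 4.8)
The plan is to recognize $P_{R,L,H}$ as an instance of the general random discrete probability measure treated in \citet{Ongaro2004} and then transport their posterior computation. Write $P_{R,L,H} = \sum_{i=R+1}^{\infty} p_i \delta_{\zeta_i}$; conditionally on $R = r$, the weight vector $(p_{r+1}, p_{r+2}, \ldots)$ has the $\mathrm{PK}^{(r)}(L)$ distribution on $\nabla_{\infty}$, the locations $\zeta_i$ are i.i.d.\ from the diffuse measure $H$ and independent of everything else, and $R$ itself carries an arbitrary probability mass function $\pi$. After reindexing $i \mapsto i - r$ on $\{R = r\}$, this is exactly Ongaro's setup with atom count $M \equiv \infty$, except that the conditional weight law is now additionally indexed by the latent integer $R$. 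The one structural fact to verify is that $R$ influences the sampling of $\boldsymbol{X}$ only through this conditional weight law --- which holds because the $\zeta_i$ are i.i.d.\ $H$ irrespective of $R$ --- so that the latent variable $R$ can be updated by the same Bayesian recursion that \citet{Ongaro2004} use for $M$. I keep the index range $R+1, R+2, \ldots$ only to make the posterior shift $R \mapsto R^{\boldsymbol{X}}$ legible in \eqref{eq:5.1}.

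Next I would write down the joint law of $(R, \boldsymbol{p}, \boldsymbol{\zeta}, \boldsymbol{X})$ and apply Bayes' theorem. Because $H$ is diffuse, almost surely the only repetitions among $X_1, \ldots, X_n$ come from sampling the same atom more than once; let $X_1^{\ast}, \ldots, X_k^{\ast}$ be the distinct values, with multiplicities $n_1, \ldots, n_k$. Conditioning on $\{R = r, \boldsymbol{p}, \boldsymbol{\zeta}\}$, the probability of a labelled sample configuration in which the $\ell$-th distinct value is the atom $\zeta_{i_{\ell}}$ is a sum of products $\prod_{\ell=1}^{k} p_{i_{\ell}}^{\,n_{\ell}}$ over admissible index tuples, weighted by the multinomial count of orderings; integrating out the atom labels against the i.i.d.-$H$ law of $\boldsymbol{\zeta}$ and normalizing yields the posterior. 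It has the two-block form of \eqref{eq:5.1}: the $k$ atoms that were hit are relabelled $X_1^{\ast}, \ldots, X_k^{\ast}$ and receive the joint weight vector $(\gamma_1^{\boldsymbol{X}}, \ldots, \gamma_k^{\boldsymbol{X}})$, the atoms never hit keep i.i.d.-$H$ locations $\zeta_i$ with weights $p_i^{\boldsymbol{X}}$, and the posterior of the offset becomes $R^{\boldsymbol{X}}$. This reproduces Propositions 4 and 5 of \citet{Ongaro2004} under the dictionary of the previous paragraph.

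Finally, to pin down the laws of $R^{\boldsymbol{X}}$ and $\boldsymbol{p}^{\boldsymbol{X}} = (\gamma_1^{\boldsymbol{X}}, \ldots, \gamma_k^{\boldsymbol{X}}, p_{R^{\boldsymbol{X}}+1}^{\boldsymbol{X}}, p_{R^{\boldsymbol{X}}+2}^{\boldsymbol{X}}, \ldots)$ explicitly, I would run the recursion of \citet{Ongaro2004} on the sample size: process the observations one at a time, at each step conditioning on whether the incoming observation lands on a value already seen or opens a fresh atom, and update the conditional weight law by the corresponding size-biasing. Starting from $n = 0$ and iterating gives the closed recursive description mirroring Corollaries 3 and 4 of \citet{Ongaro2004}, and the predictive law of $X_{n+1} \mid \boldsymbol{X}$ is then the mean measure $E(P_{R,L,H} \mid \boldsymbol{X})$ obtained by taking expectations in \eqref{eq:5.1}. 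The main difficulty is bookkeeping rather than conceptual: one must check that the $\mathrm{PK}^{(r)}(L)$ weight law is regular enough for the size-biased conditional distributions invoked at each recursion step to be well defined --- this is where the summability $\sum_{i} L^{-1}(\Gamma_i/\Gamma_r) < \infty$ and the diffuseness of $H$ do their work --- and that the reindexing used to match Ongaro's framework is consistent with the order in which the recursion uncovers atoms, so that $R^{\boldsymbol{X}}$ is genuinely a function of the data through the induced partition.
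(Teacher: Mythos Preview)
Your proposal is correct and follows essentially the same route as the paper: both reduce the posterior computation to the general framework of \citet{Ongaro2004} by letting the latent integer $R$ play the structural role that $M$ plays there, then invoke Propositions~4--5 and Corollaries~3--4 of that reference. The paper's own argument is in fact just this reduction stated in one line, without the elaboration you supply on the joint law, the two-block decomposition, or the size-biased recursion.
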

\noindent
To calculate the predictive distribution, take expectation of \eqref{eq:5.1} to get
\[\mathrm{Pr}\left \{X_{n+1}\in A|\boldsymbol{X} \right \}=\sum_{i=1}^{k}c_i^{\boldsymbol{X}}\delta_{X_i^{\ast}}(A)+(1-c_1^{\boldsymbol{X}}-\cdots -c_k^{\boldsymbol{X}})H(A), \]
where $c_i^{\boldsymbol{X}}=E(\gamma_i^{\boldsymbol{X}})$ for $i=1,\ldots,k$.

\section { Applications}
\label{section45}

Sufficiently large sample from a random discrete probability measure like \eqref{eq:3.1} or \eqref{eq:3.4}, always includes ties with positive probability. Let $K_n=k\in \{1,\ldots,n\}$ be the number of distinct values among $n$ observations.  Denote $X_1^{\ast},\ldots,X_k^{\ast}$ as distinct values among observations $X_1,\dots, X_n$. Moreover, take $n_j=\sum_{i=1}^{n}I(X_i=X_j^{\ast})$ for $j=1,2,\ldots,k$. Obviously, $\sum_{j=1}^k n_j =n$.  
Note that we already saw how $k$ appeared in the posterior process \eqref{eq:5.1}. For the Dirichlet process, $k$ grows slowly as $n\to \infty.$ 
 As it is shown in \citet{Korwar1973} and \citet{Pitman2006}, if 
$$(X_1,\ldots,X_n)|P\sim P,~~P\sim\mathrm{DP}(\theta,H)$$
then
$$K_n/\log(n)\stackrel{a.s.}\longrightarrow \theta~~ \mathrm{as}~~ n \to \infty.$$ 
It means that the random number of distinct values $ K_n$ grows only in a logarithmic fashion.
In other perspective, the Dirichlet process prior assigns most of the largest weights to its initial points.
 This property causes inflexibility in the use of the Dirichlet process as a prior mixing measure in nonparametric Bayesian hierarchical mixture models (or so called density estimation problem) when it is fitted to an over-dispersed data \citep{Lo1984,Escobar1995, Ishwaran2001,Lijoi2005,Lijoi2007}.  Adding the new parameter $r$ and working with \eqref{eq:3.4} instead of \eqref{eq:3.1}, will allow to remove those initial large probability weights and produce smother ones instead. For the Dirichlet case, Table \ref{table2} shows how choosing larger values for $r$ will lead to smoother probability weights in \eqref{eq:3.4}.  
Specially, in problems of density estimation similar to \cite{Lo1984}, this flexibility plays a crucial role but we do not address them here in this paper.
\begin{table}[t!]
\caption{The first ten probability weights in \eqref{eq:3.4} when $L$ is the gamma L\'evy measure \eqref{eq:2aab} with $\theta=3$.}
\label{table2}\par
\begin{tabular}{cccc}
\hline

$r=0$ & \multicolumn{1}{c}{$r=3$} & \multicolumn{1}{c}{$r=5$} & \multicolumn{1}{c}{$r=10$}  \\
\hline
0.367597022 & 0.24369485 & 0.16427045 & 0.06353087   \\
0.168573239 & 0.23947841 & 0.14319002 & 0.05886303   \\
0.165457071 & 0.10281577 & 0.12842541 & 0.05418117 \\
0.149080111 & 0.08716432 & 0.10524699 & 0.05112053  \\
0.058821776 & 0.07639968 & 0.07391248 & 0.04858739   \\
0.056183134 & 0.05990201 & 0.06339345 & 0.04626107 \\
0.012551887 & 0.03862790 & 0.05279059 & 0.04349017  \\
0.007812625 & 0.03184211 & 0.04298598 & 0.03795044  \\
0.003792634 & 0.02524151 & 0.03705701 & 0.03606621 \\
0.001971704 & 0.01939928 & 0.03245281 & 0.03021665

\\ \hline
\end{tabular}
\end{table}

If we use $\alpha$-stable L\'evy measure \eqref{eq:2aacc} in \eqref{eq:3.1}, this property is even more obvious. Since for the $\alpha$-stable random variable $S_{\alpha}=\sum_{i=1}^{\infty}\Gamma_{i}^{-1/\alpha},~\alpha \in (0,1),$ taking for example $\alpha=0.5$, the first four terms of $S_{1/2}$ have infinite variance which means that there is a huge fluctuation among the initial terms.

To present an alternative interpretation,  notice that, the negative binomial distribution is preferred to the Poisson distribution when the data are over-dispersed. The variance of the negative binomial distribution is larger than its mean while for the Poisson distribution, both mean and variance are equal. Therefore, we expect that the random discrete probability measure \eqref{eq:3.4} outperforms \eqref{eq:3.1} as a prior mixing measure in nonparametric Bayesian hierarchical mixture models fitted to the over-dispersed data. Recall that the weights of  \eqref{eq:3.4} are the normalized points of the negative binomial process \eqref{eq:3aa}. However, the weights of \eqref{eq:3.1} are the normalized points of the Poisson process \eqref{eq:2aaa} with a certain mean measure. We can observe that the weights in \eqref{eq:3.4} decrease much slower than that of \eqref{eq:3.1}. See Table \ref{table2}  for the case that $L$ is the gamma L\'evy measure \eqref{eq:2aab}. This exhibits the mechanism that the random discrete probability measure \eqref{eq:3.4} can naturally capture the over-dispersion better. In \citet[Theorem 2.1]{ross2021}, the growth rate of  $K_n(\alpha,r)=K_n$ is given rigorously for $\mathrm{PK}^{(r)}(L)$ when $L$ is the $\alpha$-stable L\'evy measure. In other words, as $n\to \infty$
 \begin{equation}
 K_n/n^{\alpha}\stackrel{d}\longrightarrow Y_{\alpha,r}. \label{eq:mmm} 
 \end{equation}
See equation 2.8 in  \citet{ross2021} for the distribution of $Y_{\alpha,r}$. The growth rate $n^{\alpha}$ in \eqref{eq:mmm} is equal to that of the Poisson-Dirichlet process \citep[Theorem 3.8]{Pitman2006} and the normalized generalized gamma process \citep[Proposition 3]{Lijoi2007}.
It is not surprising to see that the growth rates of $K_n$ for these processes are equal as all these processes belong to the greater family of the random discrete probability measure \eqref{eq:3.4}.
 We notice that the Poisson-Dirichlet process $\mathrm{PDP}(H;\alpha,\theta)$  is a particular case of the random discrete probability measure \eqref{eq:3.4} with $r=\theta/\alpha$ and $L$ given in \eqref{eq:3.4aaaa} and the normalized generalized gamma process is a particular case of the random discrete probability measure \eqref{eq:3.4} with $r=0$ and $L$ given in \eqref{eq:3.4aaaa}. 
 The Poisson-Dirichlet process and the normalized generalized gamma process have already been recommended in the literature to be exploited as mixing measures in nonparametric Bayesian hierarchical mixture models in order to allow the number of distinct values (the number of clusters) increases at a rate faster than that of the Dirichlet process. 
  Therefore, the proposed random discrete probability measure \eqref{eq:3.4} can be considered as a general alternative for the mixing measure in nonparametric Bayesian hierarchical mixture models fitted to the over-dispersed data.

\section {A New Alternative Series Representation for the Poisson-Dirichlet Process}
\label{section3}

Using NBP,  we can find another series representation rather than the stick-breaking representation for the Poisson-Dirichlet process. See \citet{Carlton1999} for properties and applications of this process in nonparametric Bayesian analysis. 
For $0\leq \alpha<1, ~\theta >-\alpha$, let  $(\beta_k)_{k\geq 1}$ be a sequence of independent random variables, where $\beta_k$ has the Beta$(1-\alpha,\theta+k\alpha)$ distribution. If we define
$$p'_1=\beta_1,~p'_i=\beta_i\prod_{k=1}^{i-1}(1-\beta_k),~i\geq 2,$$
then the ranked sequence of  $(p'_i)_{i\geq 1}$ denoted by $p_1\geq p_2\geq\ldots$  is said to have a Poisson-Dirichlet distribution with parameters $\alpha$ and $\theta$ denoted by PD$(\alpha, \theta)$ and we write $(p_1,p_2,\ldots)\sim \mathrm{PD}(\alpha, \theta).$ Also, note that $(p'_1,p'_2,\ldots)\sim \mathrm{GEM}(\alpha, \theta)$ with the notation used in \citet{Carlton1999}.
 Moreover, let  $(\zeta_i)_{i\geq 1}$ be i.i.d. random variables with values in $\mathbb{E}$ and common distribution $H$. 
 Then the random probability measure 
\begin{equation}
P_{\alpha,\theta,H}(\cdot)=\sum_{i=1}^{\infty}p_i\delta_{\zeta_i}   \label{eq:3b4}
\end{equation}
is called Poisson-Dirichlet process
with parameters $\alpha, \theta,$ $H,$ and denoted by $\mathrm{PDP}(H;\alpha,\theta)$.
As it is shown in \citet[Lemma 2.1]{Labadi2014}, the $p'_i$'s are not strictly decreasing almost surely. Therefore, the stick-breaking representation \eqref{eq:3b4} is inefficient for simulating this process due to failure in proper stopping rules. Another approach based on Proposition 22 of  \citet{Pitman1997}, is proposed in \citet{Labadi2014} for simulating this process. This approach is more accurate than the method based on the stick-breaking representation, however, it includes more complex steps in its algorithm. In this section, we apply our new representation of NBP \eqref{eq:3aa} to the Proposition 21 in \citet{Pitman1997} to give a new representation for the Poisson-Dirichlet process. In section \ref{section5}, we will show how simulating the Poisson-Dirichlet process using this representation is much more efficient while it avoids the shortcoming of the stick-breaking representation for simulation purposes. Moreover, our approach is less complex compared with the algorithm A in \citet{Labadi2014}.

Now, following the Proposition 21 in \citet{Pitman1997}, for $\theta >0$ and $0<\alpha<1$, let $(X_t)_{t\geq 0}$ be a subordinator having L\'evy measure
\begin{equation}
L(x)=\frac{\alpha}{\Gamma(1-\alpha)}\int_{x}^{\infty}u^{-\alpha-1}e^{-u}\mathrm{d}u,~x>0 \label{eq:3.4aaaa}
\end{equation}
and let $(\sigma_r)_{r> 0}$ be an independent gamma subordinator. Then 
\begin{equation}
\left (\frac{\Delta X_{T}^{(1)}}{X_{T}}, \frac{\Delta X_{T}^{(2)}}  {X_{T}}, \ldots  \right )\sim \mathrm{PD}(\alpha, \theta)~~\mathrm{if}~~ T=\sigma_{\theta/\alpha}. \label{eq:3.444b}
\end{equation}
Comparing  \eqref{eq:3.444b} with  \eqref{eq:4444}, we see that $\mathrm{PD}(\alpha, \theta)\stackrel{d}=    \mathrm{PK}^{(\theta/\alpha)}(L)$ with $L$ given in \eqref{eq:3.4aaaa}. 
Since $\mathrm{PD}(\alpha, \theta)\stackrel{d}=    \mathrm{PK}^{(\theta/\alpha)}(L)$ and  \eqref{eq:4444} is equal to \eqref{eq:4} in distribution, we can conclude  
$\mathrm{PDP}(H;\alpha,\theta)\stackrel{d}=    \mathrm{PKP}^{(\theta/\alpha)}(H;L)$ for $\theta >0, 0<\alpha<1$ and $L$ in \eqref{eq:3.4aaaa}. In other words, the random probability measure
\begin{equation}
P_{r,L,H}(\cdot)=\sum_{i=r+1}^{\infty}\frac{L^{-1}(\Gamma_{i}/\Gamma_{r})}{\sum_{i=r+1}^{\infty}
L^{-1}(\Gamma_{i}/\Gamma_{r})}\delta_{\zeta_{i}}(\cdot)      \label{eq:3.55555}
\end{equation}
with $L$ given in  \eqref{eq:3.4aaaa} and $r=\theta/\alpha$ is distributed as either $\mathrm{PDP}(H;\alpha,\theta)$ or $\mathrm{PKP}^{(\theta/\alpha)}(H;L)$.
 Therefore, \eqref{eq:3.55555} provides another series representation for the Poisson-Dirichlet process for the case $\theta >0$ and $0<\alpha<1$ through a negative binomial process.
 
Note that $\mathrm{PK}^{(0)}(L)\stackrel{d}= \mathrm{PD}(\alpha, 0)$ for $0<\alpha<1$ and $L$ given in \eqref{eq:2aacc} and also, $   \mathrm{PK}^{(0)}(L)\stackrel{d}= \mathrm{PD}(0, \theta)$ for $\theta >0$ and $L$ given in \eqref{eq:2aab}. See \citet{ross2018} for stick-breaking representations of $\mathrm{PK}^{(r)}(L)$ with $L$ given in \eqref{eq:2aacc} and  \eqref{eq:2aab} when $r>0$.

\section {Simulating a New Approximation of the Poisson-Dirichlet Process}
\label{section5}
By applying a truncation method on the new series representation of the Poisson-Dirichlet process $\mathrm{PDP}(H;\alpha,\theta)$ given in \eqref{eq:3.55555}, we can approximate this process  from
 \begin{equation}
P_{n,r,L,H}(\cdot)=\sum_{i=r+1}^{n}\frac{L^{-1}(\Gamma_{i}/\Gamma_{r})}{\sum_{i=r+1}^{n}
L^{-1}(\Gamma_{i}/\Gamma_{r})}\delta_{\zeta_{i}}(\cdot)  \label{eq:3.4cccc}
\end{equation}
 for $0<\alpha <1$, $\theta > 0$, $r=\theta/\alpha$, and $L$ given in \eqref{eq:3.4aaaa}. We can suggest a stopping rule for choosing $n=n(\epsilon)$ as follows           
 $$n=\inf \left \{i:\frac{L^{-1}(\Gamma_{i}/\Gamma_{r})}{ \sum_{j=r+1}^{i}L^{-1}(\Gamma_{j}/\Gamma_{r})}<\epsilon \right \} ~~\mathrm{for}~\epsilon \in (0,1).$$
 
\begin{figure} [t!]
\includegraphics[scale=0.7]{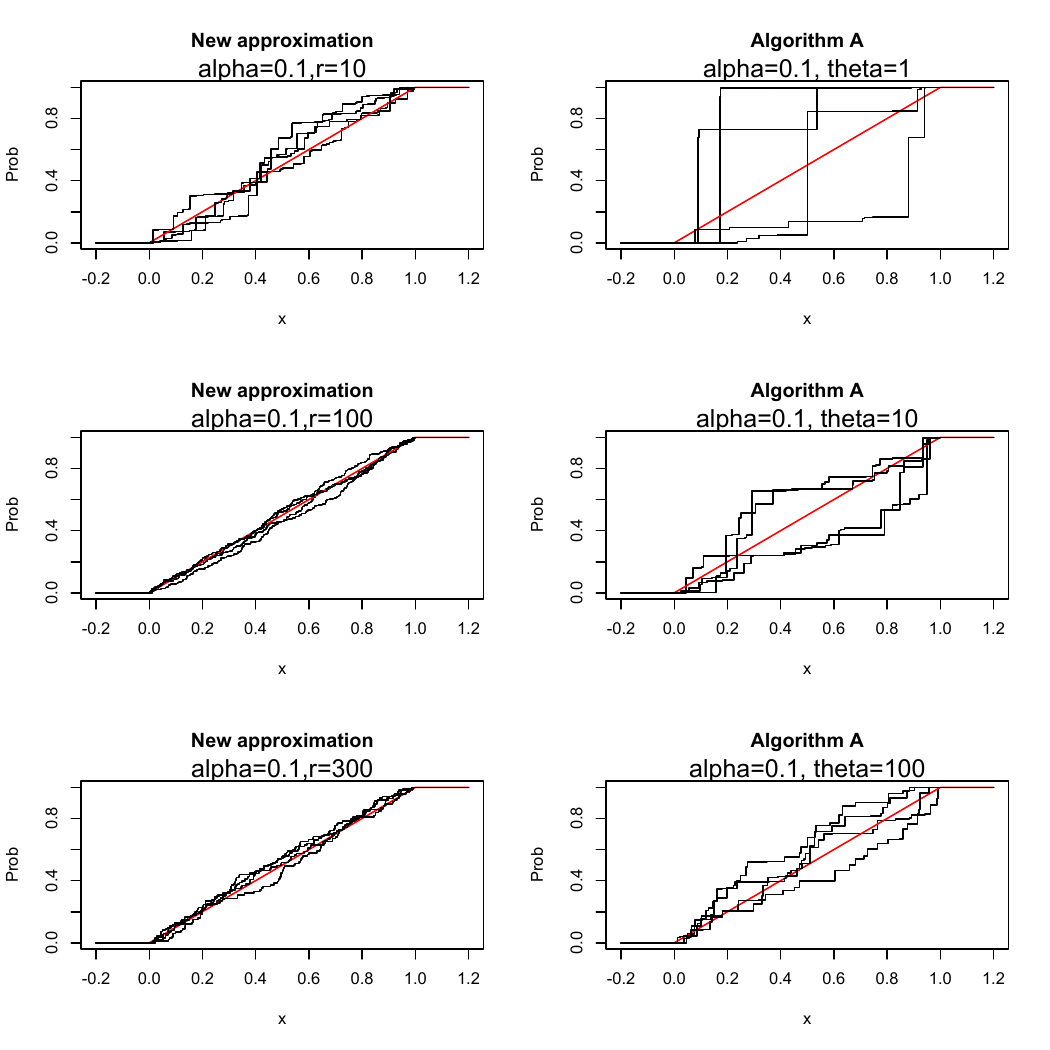}
\caption[]{Sample paths of the two-parameter Poisson-Dirichlet process, where $H$ is the uniform distribution on $[0, 1]$, $\alpha= 0.1,$ and $\theta = 1, 10, 100$. The red line denotes the cumulative distribution function of $H$.}
\label{fig1}
\end{figure}

\begin{figure}[t!] 
\includegraphics[scale=0.7]{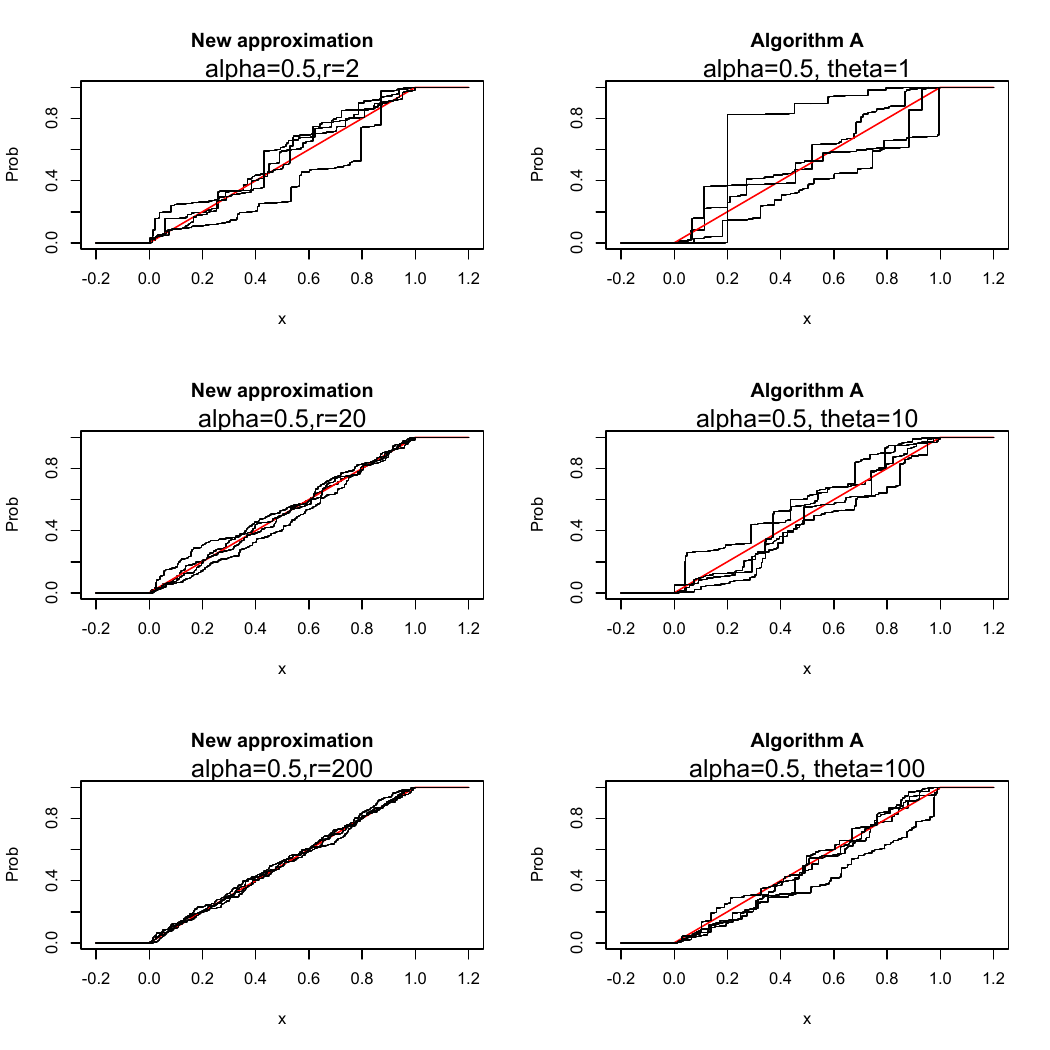}
\caption[]{Sample paths of the two-parameter Poisson-Dirichlet process, where $H$ is the uniform distribution on $[0, 1]$, $\alpha= 0.5,$ and $\theta = 1, 10, 100$. The red line denotes the cumulative distribution function of $H$.}
\label{fig2}
\end{figure}

\begin{figure} [t!]
\includegraphics[scale=0.7]{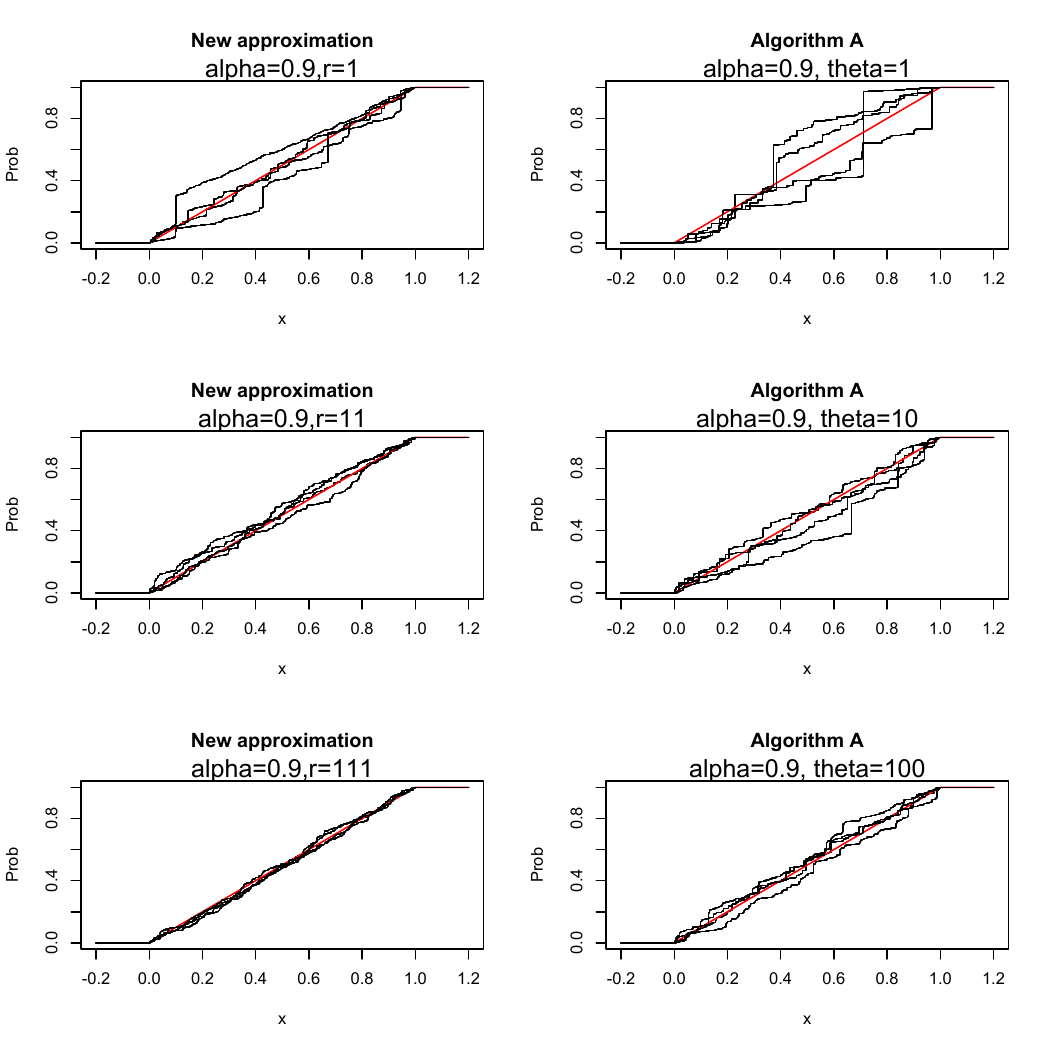}
\caption[]{Sample paths of the two-parameter Poisson-Dirichlet process, where $H$ is the uniform distribution on $[0, 1]$, $\alpha= 0.9,$ and $\theta = 1, 10, 100$. The red line denotes the cumulative distribution function of $H$.}
\label{fig3}
\end{figure} 
 
Here, we compare the approximation given in \eqref{eq:3.4cccc}  with the Algorithm A in \citet{Labadi2014} which is based on Proposition 22 of  \citet{Pitman1997}. The superiority of this approximation over the corresponding stick-breaking approximation is presented, particularly for the cases when $\alpha$ is close to 1. Since the weights
 $$\left ( \frac{L^{-1}(\Gamma_{i}/\Gamma_{r})}{ \sum_{i=r+1}^{n}L^{-1}(\Gamma_{i}/\Gamma_{r})} \right )_{r+1\leq i \leq n}$$ 
 are strictly decreasing, simulating the Poisson-Dirichlet process through representation \eqref{eq:3.4cccc} is very efficient.

Figures \ref{fig1}, \ref{fig2}, and \ref{fig3} show sample paths for the approximate Poisson-Dirichlet process with different values of $\alpha $ and $\theta~(r=\theta/\alpha)$. Clearly, the approximation \eqref{eq:3.4cccc} outperforms the approximation given in Algorithm A in \citet{Labadi2014} in all cases, as the sample paths for this approximation stay closer to the base measure $H$. This behavior agrees with Chebyshev’s inequality.
As is expected from Chebyshev’s inequality, a sample path should approach the base measure faster when either $\alpha$ or $\theta$ gets larger.

In the simulation, we set $n = 20$, $m = 20$ in Algorithm A, and $n = 20 \times 20 = 400$ in  \eqref{eq:3.4cccc}. Throughout this section, we take the base measure $H$ to be the uniform distribution on $[0,1]$. We also compute the Kolmogorov distance between the Poisson-Dirichlet process and $H$ for different values of $\alpha$ and $\theta~(r=\theta/\alpha)$. The Kolmogorov distance between $P_{n,r,L,H}$ and $H$, denoted by $d(P_{n,r,L,H},H)$, is defined by
 \begin{align}
d(P_{n,r,L,H},H)&=\sup_{x\in \mathbb{R}}|P_{n,r,L,H}(-\infty,x],H(-\infty,x]|   \nonumber \\
&:= \sup_{x\in \mathbb{R}}|P_{n,r,L,H}(x),H(x)|. \nonumber
\end{align}
For different values of $\alpha$ and $\theta$, we have obtained 500 Kolmogorov distances and reported the average of these values in Table \ref{table1}. From the simulation results in Table \ref{table1}, we can conclude that the new approach outperforms in all cases.

 \begin{table}[t!]
 \caption{This table reports the Kolmogorov distance $d(P_{n,r,L,H},H)$, where $H$ is a uniform distribution on $[0, 1]$.}
\label{table1}\par
\begin{tabular}{ccccc}
\hline
& & & New & Algorithm A \\ 
  \cline{4-5}
$\alpha$ & \multicolumn{1}{c}{$\theta$} & \multicolumn{1}{c}{$r$} & \multicolumn{1}{c}{$d$} & \multicolumn{1}{c}{$d$} \\
\hline
0.1 & 1 & 10 & 0.03133 & 0.35899  \\
0.1 & 10 & 100 & 0.03819 & 0.19301  \\
0.1 & 100 & 300 & 0.06443& 0.13835  \\
0.5 & 1 & 2 & 0.14497 & 0.24855  \\
0.5 & 10 & 20 & 0.06549 & 0.14268  \\
0.5 & 100 & 200 & 0.04606& 0.10126  \\
0.9 & 1 & 1 & 0.09294 & 0.18038  \\
0.9 & 10 & 11 & 0.05178 & 0.10100  \\
0.9 & 100 & 111 & 0.03998 & 0.07124 

\\ \hline
\end{tabular}
\end{table}

\section {Concluding Remarks}
\label{section6}

We derive the negative binomial process directly as a functional of the Poisson random measure. Then using  this derivation of the negative binomial process, we provide a generalized Poisson-Kingman distribution and also a random discrete probability measure which contains many well-known priors in nonparametric Bayesian analysis such as Dirichlet process, Poisson-Dirichlet process, normalized generalized gamma process, etc. A natural extension of the Dirichlet process as a functional of our proposed series representation for the negative binomial process is obtained. We also provide an almost sure convergent approximation for this extended Dirichlet process. 
Then the general structures of the posterior and predictive processes are given. We also justify the role of the parameter $r$ in clustering problem.
Another by-product of our proposed series representation for the negative binomial process is a new series representation for the Poisson-Dirichlet process. It is shown that an approximation based on this new representation for the Poisson-Dirichlet process is very efficient, as illustrated in a simulation study.

\bibliographystyle{ba}
\bibliography{sample}

\begin{acks}[Acknowledgments]
The research of the second author is supported by the Natural Sciences and Engineering Research Council of Canada (NSERC) with grant number RGPIN-2018-04008.
 \end{acks}

\end{document}